\documentclass[12pt]{article}
\usepackage{geometry}                
\usepackage{graphicx,color,mathtools}
\usepackage{amssymb,amsmath,amsthm,mathrsfs}
\usepackage[all,cmtip]{xy}
\usepackage{epstopdf, comment, url}
\usepackage{bm} 
\usepackage{enumitem}

\usepackage[pdftex,bookmarks,pdfnewwindow,plainpages=false,unicode,pdfencoding=auto]{hyperref}

\numberwithin{equation}{section}

\newtheorem{theorem}{Theorem}[section]

\newtheorem{lemma}[theorem]{Lemma}

\theoremstyle{remark}
\newtheorem*{remark}{Remark}

\theoremstyle{definition}

\DeclareMathOperator{\aut}{Aut}

\DeclareMathOperator{\inv}{Inv}
\DeclareMathOperator{\BMO}{BMO}
\DeclareMathOperator{\GCE}{GCE}
\DeclareMathOperator{\inn}{Inn}

\title{Critical structures of inner functions II}
\author{Oleg Ivrii}
\date{September 8, 2026}

\begin{document}

\maketitle

\begin{abstract}
We study the correspondence proposed in \cite{critical-structures} between inner functions modulo post-compositions with automorphisms of the unit disk and cyclic subspaces of the weighted Bergman space $A^2_1$. The correspondence sends an inner function $I$ to the invariant subspace $[I']$, and in the opposite direction, assigns to a non-zero function $H \in A^2_1$ the Liouville map $I_H$ associated to the canonical solution of the Gauss curvature equation $\Delta u = |H|^2 e^{2u}$. We prove that $I'_H$ generates the same cyclic subspace as $H$. Combined with the earlier results in \cite{critical-structures}, this shows that these two mappings are inverses of one another, and hence the correspondence $I \to [I']$ is a bijection. 
\end{abstract}

\section{Introduction}

An {\em inner function} $F$ is a holomorphic self-map of the unit disk such that for a.e.~$\theta \in [0,2\pi)$, the radial boundary value $F(e^{i\theta}) = \lim_{r \to 1} F(re^{i\theta})$ exists and belongs to the unit circle.
By a classical result of A.~Beurling, one can parametrize inner functions up to post-compositions with rotations by invariant subspaces of $H^2$.
In this paper, we continue the programme started in \cite{critical-structures} of parametrizing inner functions up to post-compositions with M\"obius transformations by cyclic subspaces of $A^2_1$.

The weighted Bergman space $A^2_1$ consists of holomorphic functions on the unit disk for which
$$
\| H \|_{A^2_1}^2 \, = \, \frac{2}{\pi} \int_{\mathbb{D}} |H(z)|^2 (1-|z|^2) dA(z) \, < \, \infty.
$$
We say that a (closed) subspace $X \subset A^2_1$ is {\em invariant} if $zX \subset X$.
For a function $H \in A^2_1$, the {\em cyclic subspace generated by $H$} is defined as the minimal invariant subspace of $A^2_1$
which contains $H$:
$$
[H] = \overline{\{ Hp \, : \, p \text{ polynomial} \}}^{A^2_1}.
$$
In \cite{critical-structures}, the author set out to show that inner functions
$$
\inn / \aut(\mathbb{D}) = \{ \text{cyclic subspace of }A^2_1 \}, \qquad I \to [I'],
$$
but only proved that the map is well-defined and injective. Here, we prove surjectivity:

\begin{theorem}
\label{main-thm}
For every $H \in A^2_1 \setminus \{0\}$, there is an inner function $I_H$ whose derivative generates $[H]$. Moreover, $I_H$ is unique up to post-composition with an automorphism of the unit disk.
\end{theorem}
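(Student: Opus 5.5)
We construct $I_H$ from the Gauss curvature equation, as the abstract indicates. Uniqueness is already settled: the earlier work \cite{critical-structures} shows that $I \mapsto [I']$ is well-defined and injective on $\inn/\aut(\mathbb{D})$, so two inner functions whose derivatives generate $[H]$ differ by an automorphism. It therefore remains to produce, for a given $H \in A^2_1\setminus\{0\}$, an inner function whose derivative generates $[H]$.

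\emph{Step 1 (construction).} Let $u_H$ be the canonical solution of $\Delta u = |H|^2 e^{2u}$. It is the maximal solution dominated by the hyperbolic metric, i.e.\ $u_H \le \log\frac{1}{1-|z|^2}$ up to the normalizing constant, obtained as the increasing limit of the solutions on $\{|z|<r\}$ with infinite boundary values, or equivalently as a Perron-type supremum of subsolutions. Since $e^{2u_H}|H|^2|dz|^2$ has curvature $-1$, Liouville's theorem yields a holomorphic $I_H:\mathbb{D}\to\mathbb{D}$ with
$$u_H=\log\frac{|I_H'|}{(1-|I_H|^2)|H|}.$$
The earlier results in \cite{critical-structures} show that this $I_H$ is inner; the condition $H\in A^2_1$ is what forces $u_H$ to approach $\log\frac1{1-|z|^2}$ near almost every boundary point. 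Writing $I_H' = H\cdot F$ with $F = e^{u_H}(1-|I_H|^2)$ holomorphic and nonvanishing away from the zeros of $H$, it remains to show $[I_H']=[H]$.

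\emph{Step 2 (inclusion $[I_H']\subset[H]$).} We show that $F$ is a bounded multiplier, or at least that $HF$ lies in $[H]$. Comparing $u_H$ with the hyperbolic metric gives $|F| \lesssim 1$: indeed $e^{u_H}\le \frac{1}{1-|z|^2}$ and $1-|I_H|^2\le 1$ give $|F|\le \frac{1-|I_H|^2}{1-|z|^2}$, which is bounded by Schwarz--Pick only in the wrong direction. So we argue instead through the maximality of $u_H$: $\log|F|$ is superharmonic-type controlled. Once $F\in H^\infty$ is established, $HF \in [H]$ follows by approximating $F$ by its Fejér means $p_n$, using $\|Hp_n\|\le\|H\|\,\|F\|_\infty$ and weak convergence.

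\emph{Step 3 (inclusion $[H]\subset[I_H']$).} This is the main obstacle: it amounts to showing that $F$ is cyclic in the appropriate sense, i.e.\ that dividing by $F$ loses nothing. The plan is to use the extremality of the canonical solution. If $[I_H']$ were a proper subspace of $[H]$, the invariant subspace $[I_H']$ would be generated by some $G=HF$ with $F$ carrying a nontrivial ``$A^2_1$-inner'' factor. For that $G$ one can form a solution $\tilde u=u_H+\log|F|+\text{(harmonic correction)}$ of the same equation for $|H|^2$ that strictly exceeds $u_H$ somewhere while staying below the hyperbolic bound, contradicting maximality. Concretely, we take $\tilde u$ to be the canonical solution for $|I_H'|^2$ shifted by $\log|F|$; the subtle point is controlling the boundary behaviour of $\log|F|$ through the Korenblum--Roberts type description of cyclic vectors in $A^2_1$ (no singular-measure part charging Beurling--Carleson sets). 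Equivalently, we would show that $\log|F|$ has no negative singular part by exhibiting $F = \lim (1-|I_H|^2)e^{u_H}$ as a limit of functions $e^{u_r-u_H}$ whose logarithms have nonnegative boundary measures. Combining Steps 2 and 3 gives $[I_H']=[H]$.
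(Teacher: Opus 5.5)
Your uniqueness argument is fine; it is what the paper does via the third and fifth facts listed in Section~\ref{sec:canonical-solutions}. The existence part, however, has genuine gaps.

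First, Step~1 describes the wrong object. The canonical solution is not the maximal solution (the Perron supremum, or the limit of solutions on $\{|z|<r\}$ with infinite boundary values). It is $u_{H,\infty}=\lim_n u_{H,n}$, the increasing limit of the solutions with finite constant boundary traces $n$. The two differ essentially. For an atomic singular inner function $S$, the function $H=S'$ is zero-free, so the maximal solution of $\GCE_H$ is $\log\frac{2}{|H|(1-|z|^2)}$. Its Liouville map is an automorphism, whose derivative generates $A^2_1\neq[S']$ (by the injectivity you quote), whereas $I_{S'}=S$.

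Second, the goal of Step~2, $I_H'/H\in H^\infty$, is false in general. The function $H=1-z$ is cyclic, so $I_H=I_1$ is an automorphism, and $I_H'/H$ is unbounded near $1$. The inclusion $I_H'\in[H]$ is true, but it is imported from the earlier paper rather than obtained by bounding $I_H'/H$. Boundedness is available only for the approximants: $u_{H,n}\le n$ gives $|I_n'/H|\le e^n/2$.

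The decisive gap is Step~3, which is the whole content of the theorem. Since $F=I_H'/H$ is zero-free, $\log|F|$ is harmonic. Hence $\tilde u=u_H+\log|F|+h$ satisfies $\Delta\tilde u=|H|^2e^{2u_H}$, and it solves $\GCE_H$ only if $|F|e^{h}\equiv 1$, that is, only if $\tilde u=u_H$. Your concrete choice makes this explicit. Because $I_{I_H'}=I_H$, the canonical solution of $\GCE_{I_H'}$ is $\log\frac{2}{1-|I_H|^2}$, and adding $\log|F|$ returns exactly $u_H$. So no competitor arises, and maximality is in any case not the extremal property of $u_{H,\infty}$.

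The appeal to Korenblum--Roberts does not help either. That criterion characterizes cyclic singular inner functions. No description is known of when $[HF]=[H]$ for an arbitrary $H\in A^2_1$ (which need not even lie in the Nevanlinna class) and an unbounded zero-free $F$, and your sketch does not supply one.

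What is missing is the paper's two-step mechanism:
\begin{enumerate}
\item Compare $u_{H,\infty}$ with $u_{H,0}$, whose Liouville map $I_0$ already satisfies $[I_0']=[H]$ by the earlier work. Monotonicity $u_{H,0}\le u_{H,\infty}$ yields the pointwise bound $|I_0'/I_H'|\le (1-|I_0|^2)/(1-|I_H|^2)\le 1/(1-|I_H|^2)$.
\item Theorem~\ref{main-thm2} converts such a quotient bound into $I_0'\in[I_H']$. It is proved by a homotopy $h_t=J'q^t$, advanced in steps $2s<1-t$ through the bounded invertible multipliers $q(rz)^s$. The $A^2_1$-convergence is controlled on $B(0,r)$ by Lemma~\ref{beta-lemma} (integrability for exponents $\beta<2$), and on the annulus by Lemma~\ref{nde} together with H\"older's inequality over Carleson squares.
\end{enumerate}
Nothing in your proposal plays the role of either ingredient.
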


\subsection{Canonical solutions}
\label{sec:canonical-solutions}

The inner functions $I_H$ are constructed using the Liouville correspondence which provides a bridge between complex analysis and semi-linear elliptic PDE.

For a function $H \in A^2_1 \setminus \{ 0 \}$, we consider the Gauss curvature equation
\begin{equation}
\tag{$\GCE_H$}
\Delta u = |H|^2 e^{2u}.
\end{equation}
By Liouville's theorem \cite[Theorem 3.3]{KR08}, every solution of $\GCE_H$ has the form
$$
u = \log \frac{1}{|H|} \frac{2 |F'|}{1-|F|^2},
$$
where $F$ is a holomorphic self-map of the unit disk which has the same critical points as $H$, counted with multiplicity. Furthermore, $F$ is unique up to post-composition with an automorphism of the unit disk. We refer to $F$ as the {\em Liouville map} of $u$.

By the theory of semi-linear elliptic PDEs, for every $h \in L^\infty(\partial \mathbb{D})$, the boundary value problem
\begin{equation}
\label{eq:BVP}
\begin{cases}
\Delta u = |H|^2 e^{2u}, \qquad & \text{in }\mathbb{D}, \\
u = h, \qquad & \text{on }\partial\mathbb{D},
\end{cases}
\end{equation}
has a unique solution. Here, the boundary values are understood in the sense of boundary traces: as $r \to 1$, the measures $u(re^{i\theta}) d\theta \to h d\theta$ converge in the weak-$*$ topology. Furthermore, if $h_1 \le h_2$ pointwise a.e.~then, $u_{H, h_1} \le u_{H, h_2}$.

For $n \in \mathbb{Z}$, we write $u_{H, n}$ for the solution of ($\GCE_H$) with constant boundary values $n$.
By the monotonicity of solutions, $u_{H, m} \le u_{H, n}$ if $m \le n$. It is explained in \cite{critical-structures} that
the pointwise limit $u_{H, \infty} = \lim u_{H, n}$ is also solution, called the {\em canonical solution}\/.

We write $I_H$ for the Liouville map associated to $u_{H,\infty}$. In \cite{critical-structures}, the following observations were made regarding $I_H$:

\begin{itemize}
\item $I_H$ is an inner function.
\item $I'_H \in [H]$.
\item If $[H_1] = [H_2]$ then $I_{H_1} = I_{H_2}$.
\item $[I'] = [(m \circ I)']$ for any $m \in \aut(\mathbb{D})$.
\item If $F$ is an inner function, then $I_{F'} = F$.
\end{itemize}

What is missing is that $I'_H$ generates $[H]$. This is the content of Theorem \ref{main-thm}.

\section{Preliminaries}

In this section, we gather some miscellaneous lemmas that will be used throughout this paper. We start with the following standard lemma:

\begin{lemma}
\label{bounded-lemma}
Let $X$ be an invariant subspace of $A^2_1$. If $f \in X$ and $b \in H^\infty$, then $fb \in X$.
\end{lemma}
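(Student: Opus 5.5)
The plan is to approximate $b$ by polynomials in a way that converges in $A^2_1$ after multiplication by $f$, and then use that $X$ is closed. Since $X$ is invariant, it contains $fp$ for every polynomial $p$. It therefore suffices to find polynomials $p_n$ with $fp_n \to fb$ in $A^2_1$. For this I will use the Fejér (Cesàro) means $p_n = \sigma_n b$ of the Taylor series of $b$. Two standard facts about them are needed.

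First, $\|\sigma_n b\|_{H^\infty} \le \|b\|_{H^\infty}$ for all $n$. Indeed, $\sigma_n b(z)$ is the convolution of the function $\theta \mapsto b(z e^{i\theta})$ with the Fejér kernel, which is a positive kernel of mass one. Second, $\sigma_n b(z) \to b(z)$ pointwise for every $z \in \mathbb{D}$, because the Taylor series converges at each interior point and Cesàro means of a convergent series converge to the same limit.

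Combining these, the integrand $|f|^2 |\sigma_n b - b|^2 (1-|z|^2)$ tends to zero pointwise. It is dominated by $4\|b\|_\infty^2 |f|^2 (1-|z|^2)$, which is integrable since $f \in A^2_1$. The dominated convergence theorem then gives $\|f\sigma_n b - fb\|_{A^2_1} \to 0$. Since each $f\sigma_n b$ lies in $X$ and $X$ is closed, we conclude $fb \in X$.

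The only point requiring care is the uniform bound on the approximating polynomials. Plain Taylor partial sums of an $H^\infty$ function need not be uniformly bounded, which is why Fejér means are used instead. An equivalent alternative is to take $p_n$ to be polynomials approximating the dilates $b_r(z) = b(rz)$ uniformly on $\overline{\mathbb{D}}$. One then passes $r \to 1$ and uses the same dominated convergence argument, since $|b_r| \le \|b\|_\infty$ and $b_r \to b$ pointwise.
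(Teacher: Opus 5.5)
Your proof is correct and uses the same mechanism as the paper's: approximate $b$ boundedly and pointwise by polynomials, apply dominated convergence against the integrable weight $|f|^2(1-|z|^2)$, and use that $X$ is closed. The paper works in two stages: it first passes to the dilates $b(rz)$, then approximates each dilate uniformly on $\overline{\mathbb{D}}$ by polynomials. That is exactly your stated alternative, and the Fej\'er means simply merge the two stages into one.
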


The lemma can be proved by first approximating $b(z)$ in $A^2_1$ by its dilates $b(rz)$ with $0 < r < 1$, and then approximating each dilate uniformly on the closed unit disk by polynomials.

\begin{lemma}
\label{beta-lemma}
For every holomorphic self-map $J$ of the unit disk and $0 \le \beta < 2$, we have
$$
\int_{\mathbb{D}} \frac{|J'(z)|^2}{(1-|J(z)|^2)^\beta} \, (1-|z|^2) dA(z) < \infty.
$$
\end{lemma}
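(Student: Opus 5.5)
We need to prove Lemma \ref{beta-lemma}: for a holomorphic self-map $J$ of $\mathbb{D}$ and $0\le\beta<2$,
$$\int_{\mathbb{D}} \frac{|J'(z)|^2}{(1-|J(z)|^2)^\beta}\,(1-|z|^2)\,dA(z)<\infty.$$

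The plan is to reduce to a bound on a counting function via the change of variables formula (Littlewood–Stanton). First, post-composing $J$ with an automorphism $m$ changes the integrand only by the bounded factor
$$
\frac{|m'(w)|^2 (1-|w|^2)^\beta}{(1-|m(w)|^2)^\beta}
= \Bigl(\frac{1-|m(w)|^2}{1-|w|^2}\Bigr)^{2-\beta}
$$
evaluated at $w=J(z)$, which lies between two positive constants. So we may assume $J(0)=0$. Next, since $\log\frac{1}{|z|}\asymp 1-|z|^2$ for $|z|\ge 1/2$, and the integrand is locally bounded on $|z|\le 1/2$, it suffices to bound
$$
\int_{\mathbb{D}} \frac{|J'(z)|^2}{(1-|J(z)|^2)^\beta}\log\frac{1}{|z|}\,dA(z).
$$

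Applying the non-univalent change of variables $w=J(z)$ turns this quantity into
$$
\int_{\mathbb{D}} \frac{N_J(w)}{(1-|w|^2)^\beta}\,dA(w), \qquad N_J(w)=\sum_{J(z)=w}\log\frac{1}{|z|},
$$
the Nevanlinna counting function (counted with multiplicity). Since $J(0)=0$, Littlewood's inequality gives $N_J(w)\le \log\frac{1}{|w|}$ for $w\ne 0$; this is the subordination principle, or equivalently Jensen's formula applied to $(J-w)/(1-\bar wJ)$. The integral is then at most
$$
\int_{\mathbb{D}} \frac{\log(1/|w|)}{(1-|w|^2)^\beta}\,dA(w).
$$
This is finite: the logarithmic singularity at $0$ is integrable, and near the circle the integrand is comparable to $(1-|w|)^{1-\beta}$, which is integrable precisely when $\beta<2$.

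The main point needing care is justifying the change of variables with multiplicity and Littlewood's inequality. Both are classical, but I would state them precisely. If one prefers to avoid the counting function, an alternative is to use the Schwarz–Pick inequality $|J'|(1-|z|^2)\le 1-|J|^2$ to reduce the exponent: this bounds the integrand by $|J'|^{2-\beta}(1-|z|^2)^{1-\beta}$. That reduction, however, leads to Hardy-type integrals that are harder to control uniformly, so the counting-function route is the one I would commit to.
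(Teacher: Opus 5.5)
Your proof is correct, but it takes a genuinely different route from the paper.

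The paper never changes variables. By monotonicity in $\beta$ it reduces to $1<\beta<2$, and sets $v=1-(1-|J|^2)^{2-\beta}$. It computes $\Delta v\ge 4(2-\beta)(\beta-1)\,|J'|^2(1-|J|^2)^{-\beta}$. Since $0\le v\le 1$, the Poisson--Jensen formula bounds $\frac{1}{2\pi}\int_{\mathbb{D}}\Delta v\,\log\frac{1}{|z|}\,dA$ by $1$, and $1-|z|^2\le 2\log\frac{1}{|z|}$ finishes.

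You instead push the measure forward. After normalizing $J(0)=0$, the area formula with multiplicity and Littlewood's inequality $N_J(w)\le\log\frac{1}{|w|}$ dominate the $J$-integral by the same integral for the identity map. The two arguments are dual uses of Jensen's formula: the paper pulls back a bounded subharmonic function from the target disk, while you bound the counting function by the target's Green function.

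The paper's version is more self-contained. It needs one Laplacian computation and no normalization $J(0)=0$, and it avoids the counting function and the non-univalent change of variables. The cost is an ad hoc choice of $v$ and the reduction to $\beta>1$, which is needed for the lower bound on $\Delta v$.

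Your version is more conceptual. It yields $\int_{\mathbb{D}} g(J)|J'|^2\log\frac{1}{|z|}\,dA\le\int_{\mathbb{D}} g(w)\log\frac{1}{|w|}\,dA(w)$ for every nonnegative weight $g$ and treats all $\beta$ uniformly. It also makes the threshold $\beta<2$ visibly the integrability of $(1-|w|)^{1-\beta}$. The cost is reliance on two heavier classical facts, which you rightly flag as needing precise statements.

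A minor point: your split at $|z|=1/2$ is unnecessary, since $1-|z|^2\le 2\log\frac{1}{|z|}$ holds on all of $\mathbb{D}$.
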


\begin{proof}
Since increasing $\beta$ increases the integrand, we may assume that
$1<\beta<2$. Consider the function
$$
v(z) = 1 - (1 - |J(z)|^2)^{2-\beta}.
$$
Differentiating, we get
\begin{equation}
\label{eq:deltav}
\Delta v \ge 4(2-\beta)(\beta-1) \cdot |J'|^2 (1-|J|^2)^{-\beta}.
\end{equation}
Thus, $v$ is a subharmonic function on the unit disk which takes values between 0 and 1. Applying the Poisson-Jensen formula for subharmonic functions \cite[Theorem 4.5.1]{ransford} on $B(0,r)$ and taking $r \to 1$ yields
$$
\frac{1}{2\pi} \int_{\mathbb{D}} \Delta v(z)  \log \frac{1}{|z|} \, dA(z) \le 1.
$$
The lemma follows after substituting the lower bound for $\Delta v$ from (\ref{eq:deltav}) into the preceding formula and using $1-|z|^2 \le 2 \log(1/|z|)$.
\end{proof}

\begin{remark}
From the Poisson-Jensen formula, it follows that when $\beta = 2$, the above
integral is finite if and only if 
$$
 \int_{\partial \mathbb D} \log \frac{1}{1-|J(\zeta)|^2}\,d\zeta < \infty.
$$
 By a result of de Leeuw and Rudin, this occurs
precisely when $J$ is a non-extreme point of the unit ball of $H^\infty$,
see \cite[Section 5]{dLR58} or \cite[Chapter 9]{Hof62} for details.
\end{remark}

The following lemma is a simple consequence of the Schwarz lemma:

\begin{lemma}
\label{flexibility}
For every $R>0$, there exists a constant $C=C(R)>0$ such that
for any holomorphic self-map $J$ of the unit disk,
$$
1/C \, \le \, \frac{1 - |J(z_1)|^2}{1-|J(z_2)|^2} \, \le \, C, \qquad z_1, z_2 \in \mathbb{D}, \qquad d_{\mathbb D}(z_1,z_2)<R.
$$
\end{lemma}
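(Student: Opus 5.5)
The plan is to reduce to the case $z_2 = 0$ by a Möbius change of variables in the domain and then to use the Schwarz--Pick lemma in the target. Here $d_{\mathbb D}$ denotes the hyperbolic distance on the disk. The Schwarz--Pick lemma says that a holomorphic self-map of $\mathbb{D}$ does not increase hyperbolic distance. Hence $d_{\mathbb D}(J(z_1), J(z_2)) \le d_{\mathbb D}(z_1,z_2) < R$. The statement therefore reduces to a purely geometric claim about points $w_1, w_2 \in \mathbb{D}$ with $d_{\mathbb D}(w_1,w_2) < R$: the ratio $(1-|w_1|^2)/(1-|w_2|^2)$ is bounded above and below by constants depending only on $R$. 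After this reduction, $J$ plays no further role, and the constant is automatically uniform in $J$.

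To prove the geometric claim, I would use the Möbius automorphism $\varphi(w) = (w - w_2)/(1-\overline{w_2} w)$, which maps $w_2$ to $0$ and preserves hyperbolic distance. The standard identity is
$$
1 - |\varphi(w_1)|^2 = \frac{(1-|w_1|^2)(1-|w_2|^2)}{|1-\overline{w_2} w_1|^2}.
$$
Set $a = \varphi(w_1)$. Since $d_{\mathbb D}(a,0) < R$, there is $\rho = \rho(R) < 1$ with $|a| \le \rho$. Inverting the map gives $w_1 = (a + w_2)/(1+\overline{w_2} a)$, so $1 - \overline{w_2} w_1 = (1-|w_2|^2)/(1+\overline{w_2}a)$. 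Substituting this into the identity yields
$$
\frac{1-|w_1|^2}{1-|w_2|^2} = \frac{1-|a|^2}{|1+\overline{w_2} a|^2}.
$$
The numerator lies in $[1-\rho^2, 1]$ and the denominator lies in $[(1-\rho)^2, (1+\rho)^2]$. So the ratio lies between $(1-\rho^2)/(1+\rho)^2 = (1-\rho)/(1+\rho)$ and $1/(1-\rho)^2$. Taking $C = (1+\rho)/(1-\rho)^2$ covers both bounds, since $1/C \le (1-\rho)/(1+\rho)$ and $1/(1-\rho)^2 \le C$.

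There is no real obstacle in this argument. The only points requiring care are the algebraic identity for $1-|\varphi(w)|^2$ and the inversion step, both of which are routine. If I wanted to avoid the algebra, I could argue more qualitatively instead. The quantity $\log\frac{1}{1-|w|^2}$ is, up to an additive constant, comparable to the hyperbolic distance from $0$ to $w$. More precisely, $d_{\mathbb D}(0,w) - \log\frac{1}{1-|w|^2}$ is bounded on $\mathbb{D}$. Combining this with the triangle inequality gives $\bigl|\log(1-|w_1|^2) - \log(1-|w_2|^2)\bigr| \le R + O(1)$, which is the claim. I would present the explicit computation above, since it produces an explicit constant.
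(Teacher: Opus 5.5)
Your argument is correct and is what the paper intends: the paper gives no proof beyond calling the lemma ``a simple consequence of the Schwarz lemma.'' Your reduction via Schwarz--Pick to the fact that $1-|w|^2$ changes by at most a bounded factor on hyperbolic balls, checked through the explicit M\"obius identity, supplies the omitted details with a correct explicit constant. One small caveat concerns your alternative remark: boundedness of $d_{\mathbb D}(0,w)-\log\frac{1}{1-|w|^2}$ holds only when the metric is normalized as $2|dz|/(1-|z|^2)$ (otherwise a factor $2$ appears), and this does not affect the conclusion.
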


Let $I \subset \partial \mathbb{D}$ be an arc on the unit circle of length $<1/10$. The Carleson square with base $I$ is defined as
$$
Q_I = \bigl \{ z \in \mathbb{D} : |z| > 1 - |I|, \ z/|z| \in I \bigr \}.
$$
Let $\xi_I$ be the midpoint of the arc $I$. We denote the center of $Q_I$ by $z_{Q_I} = (1-|I|/2)\xi_I$ and
 the side length of $Q_I$ by $\ell(Q_I) = |I|$.

\begin{lemma}
\label{nde}
For every holomorphic self-map $J$ of the unit disk and Carleson square $Q \subset \mathbb{D}$,
\begin{equation}
\label{eq:nde}
\frac{1}{\ell(Q)} \int_{Q} |J'(z)|^2 (1-|z|^2) \, dA(z) \le C(1 - |J(z_Q)|^2),
\end{equation}
where $C > 0$ is an absolute constant.
\end{lemma}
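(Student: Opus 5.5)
The approach is to treat $v = 1-|J|^2$ as a function whose Laplacian controls $|J'|^2$, and to apply Green's formula on a slightly enlarged Carleson box. In this way the weighted integral of $|J'|^2$ is converted into boundary data that can be bounded by $1-|J(z_Q)|^2$.

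First normalize with a disk automorphism. Let $m$ be the automorphism sending $J(z_Q)$ to $0$, and set $\tilde J = m\circ J$. Then
$$|\tilde J'|^2 = |J'|^2\,\frac{(1-|\tilde J|^2)^2}{(1-|J|^2)^2}.$$
On the top half of $Q$ (points within bounded hyperbolic distance of $z_Q$), Lemma \ref{flexibility} makes the two quantities $1-|J|^2$ and $1-|J(z_Q)|^2$ comparable. That portion therefore contributes at most $C\,\ell(Q)\,(1-|J(z_Q)|^2)$, using only the Schwarz--Pick bound $|J'|\le (1-|J|^2)/(1-|z|^2)$ and the fact that the top half has area $\asymp \ell(Q)^2$ with $1-|z|\asymp \ell(Q)$.

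For the full box I would argue with Green's formula. The identity is
$$\Delta(1-|J|^2) = -4|J'|^2.$$
Equivalently, take the subharmonic function $|J|^2$ and the weight $\varphi(z)$ equal to the Green's function of the disk with pole at $z_Q$, which is comparable to $(1-|z|^2)\,\ell(Q)/|1-\bar z_Q z|^2$. On $Q$ this weight is $\gtrsim (1-|z|^2)/\ell(Q)$. Riesz/Poisson--Jensen at the point $z_Q$ then gives
$$\frac{1}{2\pi}\int_{\mathbb D} 4|J'|^2 \log\left|\frac{1-\bar z_Q z}{z-z_Q}\right| dA = \int_{\partial\mathbb D} |J|^2 P_{z_Q}\,dm - |J(z_Q)|^2 \le 1-|J(z_Q)|^2.$$
Here we use that the boundary values satisfy $|J|\le 1$ a.e. and $P_{z_Q}$ is a probability measure.

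On $Q$ away from a small hyperbolic disk around $z_Q$, the Green's function satisfies
$$\log\left|\frac{1-\bar z_Q z}{z-z_Q}\right| \gtrsim \frac{(1-|z_Q|^2)(1-|z|^2)}{|1-\bar z_Q z|^2} \gtrsim \frac{1-|z|^2}{\ell(Q)},$$
since $|1-\bar z_Q z|\lesssim \ell(Q)$ for $z\in Q$. Substituting this lower bound yields \eqref{eq:nde} on $Q$ minus that hyperbolic disk. The hyperbolic disk itself lies within bounded hyperbolic distance of $z_Q$, so it is handled by the first paragraph.

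The main care point is the lower bound for the Green's function on the Carleson box, uniformly in $Q$. It comes from the elementary inequality
$$\log\frac1{|b|}\ge \tfrac12(1-|b|^2)$$
applied to the Möbius factor $b$, together with the estimate $1-|b|^2 = (1-|z_Q|^2)(1-|z|^2)/|1-\bar z_Q z|^2$. This estimate gives the bound even without excising a disk, so the excision is only a safety step.
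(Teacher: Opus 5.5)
Your proof is correct and is essentially the paper's argument: Poisson--Jensen applied to $|J|^2$ bounds $\int_{\mathbb D}|J'|^2 G(w,\cdot)\,dA$ by $\frac{\pi}{2}(1-|J(w)|^2)$, and a lower bound $G(w,z)\gtrsim (1-|z|^2)/\ell(Q)$ on $Q$ finishes.

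The one real difference is where the pole sits. The paper puts it at $w=(1-2\ell(Q))\xi_I$, outside $Q$; it asserts two-sided comparability of $G$ via a Brownian-motion heuristic and then needs Lemma~\ref{flexibility} to pass from $w$ to $z_Q$. You put the pole at $z_Q$ and use $\log(1/|b|)\ge\tfrac12(1-|b|^2)$ together with $|1-\bar z_Q z|\le 2\ell(Q)$ on $Q$, which gives the estimate directly at $z_Q$ with an explicit constant. As a consequence, your automorphism normalization, the Schwarz--Pick estimate on the top half, and the excision of a hyperbolic disk are all unnecessary.
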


\begin{remark}
From the inclusion $H^\infty \subset \BMO$, it follows that the left hand side is bounded by an absolute constant. The lemma says that the
normalized $J$-energy of $Q$ is small if $|J(z_Q)|$ is close to 1.
\end{remark}

\begin{proof}
By the Poisson-Jensen formula, we have
\begin{align*}
|J(w)|^2 & = \frac{1}{2\pi} \int_{\partial \mathbb{D}} |J (\zeta)|^2 P(w, \zeta) |d\zeta| - \frac{1}{2\pi} \int_{\mathbb{D}} 
 \Delta |J(z)|^2 G(w,z) dA(z) \\
& \le 1 - \frac{2}{\pi} \int_{\mathbb{D}} 
|J'(z)|^2 G(w,z) dA(z).
\end{align*}
Rearranging, we get
$$
\int_{\mathbb{D}} |J'(z)|^2 G(w,z) dA(z) \lesssim 1-|J(w)|^2.
$$
Using the interpretation of the Green's function as the occupation density of Brownian motion, it is not difficult to see that
$$
G(w, z) \asymp \frac{1-|z|^2}{\ell(Q)}, \qquad w = (1 - 2\ell(Q)) \xi_I, \qquad z \in Q.
$$
Consequently,
$$
\frac{1}{\ell(Q)} \int_{Q} |J'(z)|^2 (1-|z|^2) \, dA(z) \lesssim 1-|J(w)|^2.
$$
Since the hyperbolic distance $d_{\mathbb{D}}(w, z_Q) = O(1)$, by Lemma \ref{flexibility}, the right hand side is comparable to $1 - |J(z_Q)|^2$.
\end{proof}

\section{A homotopy argument}

We will deduce Theorem \ref{main-thm} from the following result:

\begin{theorem}
\label{main-thm2}
Let $J, F$ be non-constant holomorphic self-maps of the unit disk which have the same critical points, counting multiplicity. Suppose that their quotient $q = F'/J'$ satisfies
\begin{equation}
\label{eq:qbound}
|q(z)| \le \frac{C}{1-|J(z)|^2}.
\end{equation}
Then, $F' \in [J']$.
\end{theorem}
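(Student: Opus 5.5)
We approximate $F' = qJ'$ by functions of the form $b J'$ with $b \in H^\infty$; by Lemma~\ref{bounded-lemma} these all lie in $[J']$, which is closed. Since $F$ and $J$ have the same critical points, $q$ is holomorphic and zero-free. The difficulty is that $q$ need not be bounded: it can blow up only where $|J|$ is close to $1$, at the rate allowed by (\ref{eq:qbound}). The name of the section suggests a homotopy in an exponent. Since $q$ is zero-free, we may write $q = e^{g}$ and consider the family $q_t = e^{tg}$ for $t \in [0,1]$. Here $q_0 = 1$, so $q_0J' = J' \in [J']$, and the goal is to move along $t$ to $q_1J' = F'$.

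For each fixed $t$ we also truncate. Put $J_r(z) = rJ(z)$ with $r<1$, and replace $q$ by a bounded holomorphic function adapted to $J_r$. The most natural candidate is $q(z)\,\phi_r(J(z))$, where $\phi_r$ is a bounded holomorphic damping factor on the disk, for instance
\[
\phi_r(w) = \Bigl(\frac{1-r}{1-rw}\Bigr)^{2},
\]
or a power of it. Such a factor is bounded by $1$ and tends to $1$ pointwise as $r\to 1$, but a single fixed-point factor will not kill growth near every boundary point. A more robust choice is the exponential form
\[
q_{t,s} = \exp\bigl(t g - s\,\psi\bigr),
\]
where $\psi$ is chosen so that $\operatorname{Re}\psi \ge \log\frac{1}{1-|J|^2}$ up to constants. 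That would make $q_{t,s}$ bounded for $s \ge t$, and hence $q_{t,s}J' \in [J']$.

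The cleaner route is instead an interpolation argument in $t$, which needs no explicit bounded approximants. Let $t_0$ be the supremum of those $t\in[0,1]$ for which $q_tJ' \in [J']$. Knowing $q_{t_0}J'\in[J']$ (obtained from closedness once continuity in $t$ is available), write
\[
q_{t_0+\delta}J' = e^{\delta g}\,(q_{t_0}J').
\]
The factor $e^{\delta g}$ is not bounded either, but $|e^{\delta g}| \le C^\delta (1-|J|^2)^{-\delta}$. Truncate it as $e^{\delta g}\chi_N$, where $\chi_N$ is bounded, and estimate the error in $A^2_1$ by
\[
\int_{\mathbb{D}} |J'|^2\,(1-|J|^2)^{-2(t_0+\delta)}\,|1-\chi_N|^2\,(1-|z|^2)\,dA.
\]
Lemma~\ref{beta-lemma} makes $|J'|^2(1-|J|^2)^{-\beta}(1-|z|^2)$ integrable for every $\beta<2$. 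So as long as the total exponent $2(t_0+\delta)$ stays below $2$, dominated convergence kills the error as $N\to\infty$ once $\chi_N\to 1$ pointwise with $|\chi_N|$ bounded. This shows that $q_tJ'\in[J']$ for all $t<1$.

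The main obstacle is the endpoint $t=1$, where $\beta=2$ and Lemma~\ref{beta-lemma} fails; the Remark shows this is a genuine extreme-point phenomenon. Here one must use that $F'$ itself lies in $A^2_1$, and in fact satisfies Carleson-type bounds: Lemma~\ref{nde} applied to $F$ gives $F'\in A^2_1$ with controlled box energy. The plan is to show $q_tJ' \to F'$ in $A^2_1$ as $t\to1^-$. Pointwise convergence holds, and $|q_t J'| \le \max(1,C)\,|J'|^{1-t}\,|F'|^{t}$, since $|q_t|=|q|^t$. Writing $|q_tJ'|^2 \le |J'|^2 + |F'|^2$ by convexity gives an integrable majorant, because both $J'$ and $F'$ lie in $A^2_1$ by Lemma~\ref{nde}. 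Dominated convergence then yields the limit, and $F'\in[J']$ by closedness.

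If this convexity domination works, the homotopy reduces to checking that each $q_tJ'$ lies in $[J']$ for $t<1$. That step is where the truncation $\chi_N$ must be constructed concretely. I would first try
\[
\chi_N = \exp\bigl(-\tfrac1N\, G\bigr), \qquad G = \text{a holomorphic function with } \operatorname{Re}G \ge \log\tfrac{1}{1-|J|^2} - c,
\]
which could be built from the harmonic majorant of $\log\frac{1}{1-|J|^2}$ where that is finite. If it is not finite, Lemmas~\ref{flexibility} and \ref{nde} would be used to localize on Carleson squares, and this is the step I expect to be hardest.
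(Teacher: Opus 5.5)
Your outer framework agrees with the paper. You use the path $q^tJ'$, pass to the endpoint $t\to1$ via the majorant $|J'|^2+|F'|^2$, use closedness of $[J']$, and apply dominated convergence with Lemma~\ref{beta-lemma} while the exponent stays below $2$. The gap is the step you flag yourself: getting from $q^{t}J'\in[J']$ to $q^{t+\delta}J'\in[J']$. The mechanism you propose for it cannot work in general. Your $\chi_N$ must be holomorphic, since $e^{\delta g}\chi_N$ has to lie in $H^\infty$ for Lemma~\ref{bounded-lemma}, and uniformly bounded, for dominated convergence. But if even one $\chi\in H^\infty$, $\chi\not\equiv0$, makes $q^\delta\chi$ bounded, then $q^\delta=(q^\delta\chi)/\chi$ lies in the Nevanlinna class. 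Hence so does $q$, because $\log^+|q^\delta|=\delta\log^+|q|$. Nothing in the hypotheses forces this. Take $J(z)=z$ and $F(z)=\lambda\int_0^z e^{\epsilon\phi(w)}\,dw$ with $\phi(w)=\sum_{k\ge0}w^{2^k}$, $0<\epsilon<\log 2$ and $\lambda>0$ small. Since $|\phi(w)|\le\log_2\frac{1}{1-|w|}+2$, $F$ is a locally univalent self-map of the disk, and (\ref{eq:qbound}) holds with $C=1$ by Schwarz--Pick. Yet $\log|q|=\log\lambda+\epsilon\operatorname{Re}\phi$ is a lacunary series whose $L^1$ means blow up (Zygmund), so $q$ is not in the Nevanlinna class. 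The conclusion is trivial in this example, but your mechanism cannot even start there. Your concrete candidate $\exp(-G/N)$ also fails on its own terms. First, $\operatorname{Re}G+c$ would be a harmonic majorant of $\log\frac{1}{1-|J|^2}$, and none exists for inner $J$ (Fatou's lemma), which is exactly the case needed for Theorem~\ref{main-thm}. Second, even when $G$ exists, the bound $|e^{\delta g}e^{-G/N}|\lesssim(1-|J|^2)^{1/N-\delta}$ gives boundedness only for $N\le 1/\delta$, so you cannot let $N\to\infty$.

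The missing idea is to give up pointwise domination near the boundary. The paper multiplies $h_t=q^tJ'$ by the dilation $a_{r,s}(z)=q(rz)^s\in\inv(H^\infty)$, for which $a_{r,s}/q^s$ is \emph{not} uniformly bounded. On $B(0,r)$ one has $d_{\mathbb D}(z,rz)=O(1)$, so Lemma~\ref{flexibility} gives $|q(rz)|\lesssim(1-|J(z)|^2)^{-1}$, and your dominated-convergence argument goes through there. On the annulus $A(0;r,1)$ the paper uses an energy estimate instead:
\begin{enumerate}
\item Tile the annulus by Carleson squares $Q_j$ of side $h=1-r$, and note that $|q(rz)|^{2s}\lesssim D_J(z_{Q_j})^{-2s}$ on $Q_j$, where $D_J=1-|J|^2$.
\item Bound the box energy $m_j$ of $h_t$ using H\"older and Lemma~\ref{nde} applied to both $J$ and $F$: $m_j\lesssim h\,D_J(z_{Q_j})^{1-t}$.
\item With $p=2s/(1-t)<1$ this gives $D_J(z_{Q_j})^{-2s}m_j\lesssim h^p m_j^{1-p}$.
\item H\"older for sums then bounds the annulus contribution by a constant times $\bigl(\int_{A(0;r,1)}|h_t|^2(1-|z|^2)\,dA\bigr)^{1-p}$, which tends to $0$.
\end{enumerate}
This is the Carleson-square localization you anticipated but did not carry out. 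It works for extreme (e.g.\ inner) $J$ with no Nevanlinna-type assumption on $q$. It is also what forces the smaller step size $2s<1-t$, rather than just $t+s<1$.
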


\begin{remark}
(i) Let $\inv(H^\infty)$ be the collection of bounded analytic functions on the unit disk with bounded inverses.
The proof below will show that there exists a sequence of functions $a_k \in \inv(H^\infty)$ such that
$J' a_k \to F'$ in $A^2_1.$

(ii) Since the unit disk is simply-connected and $q$ is zero-free, it admits a holomorphic logarithm. We fix one such branch and define the fractional powers of $q$ by $q^t = e^{t\log q}$,\, $0 \le t \le 1$.
\end{remark}

We consider the path
$$
h_t = J' q^t, \qquad 0 \le t \le 1,
$$
which connects $h_0 = J'$ with $h_1 = F'$. Since $F' = J'q$, we have
$$
|h_t|^2 =  |J'|^{2(1-t)} |F'|^{2t}.
$$
By the weighted AM-GM inequality,
$$
|h_t|^2 \, \le \, (1-t) |J'|^2 + t |F'|^2 \, \le \, |J'|^2 + |F'|^2.
$$
Thus, $h_t \in A^2_1$ for every $0 \le t \le 1$ and
$$
\| h_t \|^2_{A^2_1} \le \| J' \|^2_{A^2_1} +  \| F' \|^2_{A^2_1} .
$$
As $t \to 1$, the functions $h_t$ converge pointwise to $h_1$. By Lebesgue's dominated convergence theorem, $h_t \to h_1$ in $A^2_1$
as $t \to 1$.
Since $[J']$ is closed, if we can prove
$$
h_t \in [J'], \qquad \text{for every }0 < t < 1,
$$
it would follow that $h_1$ is also in $[J']$.

Suppose we know that $h_t \in [J']$ and we want to show that $h_{t+s} \in [J']$ for some $s > 0$.
Unfortunately, we cannot simply multiply $h_t$ by $q^s$ since $q$ may be unbounded. Instead, we define the auxiliary functions
$$
a_{r,s}(z) = q(rz)^s, \qquad 0 < r < 1.
$$
Since $q$ is non-vanishing, each function $a_{r,s}$ is bounded with bounded inverse. We will show that
\begin{equation}
\label{eq:required-convergence}
h_t a_{r,s} \to h_{t+s}, \qquad \text{in }A^2_1,
\end{equation}
as $r \to 1$, provided that
\begin{equation}
\label{eq:st}
2s < 1-t.
\end{equation}
 In view of Lemma \ref{bounded-lemma}, this allows us to advance along the path $h_t$ in sufficiently small steps. Given any $t < 1$, we can
choose a finite partition
$$
0 \, = \, t_0 \, < \, t_1 \, < \, \dots \, < \, t_n \, = \, t
$$
such that $2(t_{j+1} - t_j) < 1 - t_j$ for every $j$. After $n$ steps, we obtain that $h_t \in [J']$.

Let us examine the statement (\ref{eq:required-convergence}) more closely. Expanding definitions, we need to show that
$$
J'(z) q(z)^t q(rz)^s \to J'(z) q(z)^{s+t} \qquad \text{in }A^2_1.
$$
The convergence is uniform on compact subsets of the unit disk.
To upgrade this to convergence in the $A^2_1$ norm, we split the unit disk into the ball $B(0,r)$ and the annulus $A(0; r, 1)$. 

\subsection{Estimate on the ball $B(0,r)$}

We now show that
\begin{equation}
\label{eq:step1}
\bigl \| \chi_{B(0,r)} (h_t a_{r,s} - h_{t+s}) \bigr \|_{A^2_1} \to 0,
\end{equation}
as $r \to 1$.

For $z \in B(0,r)$, the hyperbolic distance between $z$ and $rz$ is uniformly bounded. Hence by Lemma
\ref{flexibility},
$$
1 - |J(rz)|^2 \asymp 1 - |J(z)|^2.
$$
Using the estimate (\ref{eq:qbound}), we obtain
$$
| J'(z) q(z)^t q(rz)^s |^2 (1-|z|^2) \lesssim |J'(z)|^2 (1-|J(z)|^2)^{-2(t+s)} (1-|z|^2)
$$
and
$$
| J'(z) q(z)^{s+t} |^2 (1-|z|^2) \lesssim |J'(z)|^2 (1-|J(z)|^2)^{-2(t+s)} (1-|z|^2),
$$
for $z \in B(0, r)$. The elementary estimate $|u - v|^2 \le 2|u|^2 + 2|v|^2$ shows
$$
\bigl | J'(z) q(z)^t q(rz)^s - J'(z) q(z)^{s+t} \bigr |^2 (1-|z|^2) \, \lesssim \, |J'(z)|^2 (1-|J(z)|^2)^{-2(t+s)} (1-|z|^2),
$$
for $z \in B(0,r)$. Notice that the right hand side is independent of $0 < r < 1$. By Lemma \ref{beta-lemma} and the bound on the step size  (\ref{eq:st}), the right hand side is integrable. 
From here, (\ref{eq:step1}) follows from the dominated convergence theorem.

\subsection{Estimate on the annulus $A(0; r,1)$} 

We cut the annulus $A(0, r, 1)$ into Carleson squares $Q_j$ of side length $h = 1-r$. 
Set
$$
D_J=1-|J|^2,\qquad D_F=1-|F|^2,
$$
and
$$
m_j=\int_{Q_j}|h_t(z)|^2(1-|z|^2)\,dA(z).
$$
Applying H\"older's inequality on each Carleson square and using Lemma \ref{nde}, we get
\begin{align}
\notag
m_j
& \le  \biggl (\int_{Q_j} |J'(z)|^2 (1-|z|^2) dA(z)\biggr )^{1-t} \biggl (\int_{Q_j} |F'(z)|^2 (1-|z|^2) dA(z)\biggr )^t \\
\notag
& \lesssim h \, D_J(z_{Q_j})^{1-t} \, D_F(z_{Q_j})^t \\
\notag
& \lesssim h \, D_J(z_{Q_j})^{1-t}.
\end{align}
Thus, if $p = \frac{2s}{1-t},$ then
\begin{equation}
\label{eq:ingredient}
m_j^p  \lesssim h^p \, D_J(z_{Q_j})^{2s} \qquad \text{and hence} \qquad D_J(z_{Q_j})^{-2s} m_j \lesssim h^p \, m_j^{1-p}.
\end{equation}
By Lemma \ref{flexibility} and (\ref{eq:qbound}), we have
$$
|q(rz)|^{2s} \lesssim D_J(z_{Q_j})^{-2s}, \qquad z \in Q_j.
$$

Summing over the Carleson squares that make the annulus $A(0, r, 1)$, we get
\begin{align*}
\int_{A_r} |h_t(z)|^2 |q(rz)|^{2s} (1-|z|^2) dA(z) 
& \lesssim \sum_j D_J(z_{Q_j})^{-2s} m_j  \\
& \lesssim \sum_j h^p \, m_j^{1-p} \\
& \lesssim \biggl (\sum_j h \biggr )^p \biggl (\sum_j  m_j \biggr )^{1-p}.
\end{align*}
Here, we have used H\"older's inequality for sums with exponents $1/p$ and $1/(1-p)$.

Since the lengths of the sides of the Carleson squares add up to the circumference of the circle, the first term is bounded. Consequently,
$$
\int_{A_r} |h_t(z)|^2 |q(rz)|^{2s} (1-|z|^2) dA(z) \lesssim \biggl ( \int_{A_r} |h_t(z)|^2 (1-|z|^2) dA(z) \biggr )^{1-p}.
$$
As $h_t \in A^2_1$, the last integral tends to 0 as $r \to 1$.

Since $h_{t+s} \in A^2_1$, the absolute continuity of the integral gives
$$
\int_{A_r} |h_{t+s}(z)|^2 (1-|z|^2) dA(z) \to 0, \qquad \text{as } r \to 1.
$$
By the elementary estimate $|u - v|^2 \le 2|u|^2 + 2|v|^2$,
$$
\int_{A_r} |h_t a_{r,s} - h_{t+s}|^2 (1-|z|^2) dA(z) \le 2 \int_{A_r} |h_t(z)|^2 |q(rz)|^{2s} (1-|z|^2) dA(z)
$$
$$
+2 \int_{A_r} |h_{t+s}(z)|^2 (1-|z|^2) dA(z).
$$
Since both terms on the right tend to 0,
$$
\bigl \| \chi_{A(0;r,1)} (h_ta_{r,s}-h_{t+s} ) \bigr \|_{A^2_1}
\to 0,
$$
as $r \to 1$. Together with (\ref{eq:step1}), this shows (\ref{eq:required-convergence}) as desired.

\subsection{Application to invariant subspaces}

We now deduce Theorem \ref{main-thm} from Theorem \ref{main-thm2}:

\begin{proof}[Proof of Theorem \ref{main-thm}]
Let $I_0$ and $I$ be Liouville maps for $u_{H,0}$ and $u_{H, \infty}$ respectively. From \cite[Theorem 3.1]{critical-structures}, we know that $I' \in [H]$ while $I'_0$ generates $[H]$. As $u_{H, 0} \le u_{H, \infty}$,
$$
\frac{|I'_0|}{1-|I_0|^2} \le \frac{|I'|}{1-|I|^2}.
$$
Since $I_0$ and $I$ have the same critical set as $H$, the quotient $q = I'_0/I'$ is holomorphic and non-vanishing.
Rearranging, we get
$$
|q| \, \le \, \frac{1-|I_0|^2}{1-|I|^2} \, \le \, \frac{1}{1-|I|^2}.
$$
By Theorem \ref{main-thm2}, $I'_0 \in [I']$. Hence, $[H]= [I'_0] = [I']$.

Finally, we address uniqueness. Suppose $F$ and $G$ are inner functions
with $[F'] = [G'] = [H]$. By the third bullet in Section \ref{sec:canonical-solutions},
$I_{F'} = I_{G'}$, while by the fifth, $I_{F'} = F$ and $I_{G'} = G$.
Therefore, $F$ and $G$ agree up to post-composition with an automorphism
of the unit disk.
\end{proof}

\subsection*{Acknowledgements} This research was supported by the Israel Science Foundation
(grant 3134/21).

\subsection*{Declaration on the use of AI Tools}
The proof of Theorem \ref{main-thm} was found by ChatGPT-6 Astra. The role of the human author is purely expository.

\bibliographystyle{amsplain}

\end{document}